\newcommand{\C}{{\mathbf C}}
\newcommand{\D}{{\mathbf D}}
\newcommand{\s}{{\mathbf S}}
\newcommand{\Z}{{\mathbf Z}}
\newcommand {\hk} {\mathscr {H}_k^N}
\newcommand{\B}{{\mathbf B}}
\newcommand{\N}{{\mathbf N}}
\newcommand{\R}{{\mathbf R}}
\newcommand{\beq}{\begin{equation}}
\newcommand{\eeq}{\end{equation}}
\newcommand{\beqs}{\begin{equation*}}
\newcommand{\eeqs}{\end{equation*}}
\newcommand{\ba}{\begin{eqnarray}}
\newcommand{\ea}{\end{eqnarray}}
\newcommand{\bas}{\begin{eqnarray*}}
\newcommand{\eas}{\end{eqnarray*}}
\newcommand{\bg}{\begin{gather}}
\newcommand{\eg}{\end{gather}}
\newcommand{\bgs}{\begin{gather*}}
\newcommand{\egs}{\end{gather*}}
\newcommand{\Sm}{S}
\newcommand{\ta}{\theta}
\newcommand{\infinity}{\infty}
\newcommand {\hv} {h^\infty_g}
\newcommand {\hi} {h^\infinity} 
\newcommand {\hg} {\hv} 
\newcommand {\hvt} {h^\infty_{\tilde{g}}}
\newcommand {\hgt} {\hvt}
\newcommand{\ve}{\varepsilon}
\newcommand{\gt}{\tilde{g}}
\newcommand{\la}{\lambda}
\newcommand{\fraction}{\frac}
\newcommand{\n}{{\mathcal N}}
\newcommand {\de} {\Delta}
\newtheorem*{facta}{Theorem A}{\bf}{\it}
\newtheorem{lemma}{Lemma}
\newtheorem{theorem}{Theorem}
\newtheorem{proposition}{Proposition}
\newcommand {\Infinity} {\infinity}
\begin{document}
\title{Coefficients multipliers of weighted spaces  of harmonic functions} 
\author{Kjersti Solberg Eikrem and Eugenia Malinnikova}
\email{kjersti.eikrem@gmail.com, eugenia@math.ntnu.no}
\thanks{The second author is supported by Project 213638 of the Research Council of Norway.}

\begin{abstract}
Let $h_g^\infty$ be the space of harmonic functions in the unit ball  that are bounded by some increasing radial function $g(r)$ with $\lim_{r\rightarrow 1} g(r)=+\infty$; these spaces are called growth spaces. We describe functions in growth spaces by the Ces\`aro means of their expansions in harmonic polynomials and apply this characterization to study coefficient  multipliers between growth spaces. Further, we introduce spaces of harmonic functions of regular growth and show that oscillation operators considered in \cite{LM} can be realized as multipliers mapping growth spaces to corresponding spaces of regular growth. 
\end {abstract}
\keywords{Weighted spaces of harmonic functions,  Ces\`aro means,  multipliers,  spherical harmonics, doubling weight}
\subjclass{42B15, 46E15, 47B38} 
\maketitle

\section {Introduction}

\subsection {Weighted spaces}
Let $g$ be a positive increasing continuous function such that  $\lim_{r\rightarrow +\infty} g(r)=+\infty$ and $g(1)=1$, we say that $g$ is a weight function. We study weighted (or growth) spaces of harmonic functions defined by
\begin{equation*}
h^\infty_g =\{u:\B\rightarrow\R, \Delta u=0, |u(x)|\le Kg\left(1/(1-|x|)\right)\ {\text{for some}}\ K>0\},
\end{equation*}
where $\B$ is the unit ball in $\R^{N+1}$. For $1\le p<+\infty$ we also define
\[
h^p_g=\{u:\B\rightarrow\R, \Delta u=0, \left(\int_{S}|u(rx)|^pds(x)\right)^{1/p}\le Kg\left(1/(1-r)\right)\}.\]
We assume always that
the weight function $g$ satisfies the following doubling condition
\begin{equation}
\label{gdouble}
g(2x)\le Dg(x).
\end{equation}
Our main examples are  $g (x)=x^\alpha $ and $g(x)=\left(1 +\log x\right)^\alpha $ for $\alpha>0 $.

In the present work we study operators between various growth spaces, first
we characterize functions in the weighted spaces in terms of the Ces\`{a}ro and de la Vall\'ee-Poussin sums and of their Fourier series. We show that $u\in h^p_g$, $1\le p\le \infty$, if and only if $\|\sigma_n^du\|_p\le Cg(n)$, where $d>(N-1)/2$ and $C$ does not depend on $n$. The results on Ces\'aro sums in dimension two can be found in \cite{BST}(see also references therein)  and for more general spaces in \cite{P1,P2}. The de la Vall\'ee-Poussin sums for growth spaces in dimension two were used in \cite{L1} to describe isomorphism classes of growth spaces. Our motivation comes from the study of the oscillation integral for harmonic functions in growth spaces, see \cite{LM, E, EMM}, we also work in higher dimensions and use the standard results on Ces\`{a}ro sums of spherical harmonics, \cite{S,BC,DX}, and some properties of the doubling weights.  The characterization is applied to describe the coefficient multipliers on the weighted spaces generalizing some results of \cite{SW}. Further we define new spaces, so called spaces of regular growth and show that coefficient multipliers that correspond to averaging radial operator map growth spaces to these new spaces of harmonic functions. 

The weighted spaces of harmonic functions in the unit disk have been studied by A.~L.~Shields and D.~L.~Williams in  \cite{SW}, G.~Bennett, D.~A.~Stegenga and R.~M.~Timoney in \cite {BST},  M.~Pavlovi\'c in \cite{P1,P2}, W.~Lusky in \cite {L1,L}, and others. 
In higher dimensions, the corresponding spaces of harmonic functions on the half spaces of $\R^{N+1}$ were recently characterized through wavelet expansions of the  boundary values (represented by  distributions on the boundary plane) in \cite{EMM}. The spaces of distributions in question are generalizations of the Besov spaces $B^{-s, \infty}_\infty$, the latter appear for the particular choice of the weight $g(x)=x^{s}$. In this work we give a different description of the corresponding spaces of distributions on the unit sphere.

Literature on similar spaces of analytic functions, weighted Hardy  spaces, is  extensive. We mention just some works with technique and ideas that are close to our presentation, \cite{SW1,AS,Vu,Bl,No,BRV,GPP,Do}. In particular, functions in weighted Hardy spaces $H^p_g$ with doubling weights have been characterized by the partial sums of their Fourier series for $1<p<\infty$ in a recent article by E.~Doubtsov, \cite{Do}, see also \cite{P1,P2}. This result is based on the boundedness of Fourier block projections that does not hold for $p=1,\infty$ or  $N>1$.  

\subsection {Coefficient multipliers}
Let $A $ and $B $ be
sequence spaces consisting of sequences of the form $\{a_j\}_{j \in J}$ where $a_j\in \R$ and $J$ is some index set.  A sequence $\la=\{\lambda_j\}_{j\in J} $ is called a multiplier from $A $ to $B $ if $\la a=\{\la_j a_j\} \in B $ for all $a\in A $. We consider real valued sequences and multipliers and denote the set of multipliers from $A$ to $B$
by $(A,B) $. 
 
In this work the sequences are Fourier coefficients of some functions (or distributions) on the unit sphere in $\R^{N+1}$. 
Then it is convenient to choose $J$ to be the index set for a basis of spherical harmonics. For the two-dimensional case the index set can be identified with $\Z$.
Let $\hk$ be the space of spherical harmonics of degree $k $ on the $N$-dimensional sphere and
let $\{Y_{kl}\}_{l=0}^{L_k} $ be an orthonormal basis for $\hk$ and $L_k=\dim \hk$. We denote by the same letters $Y_{kl} $ the corresponding homogeneous harmonic polynomials.
We identify a harmonic function 
\[u(x)= \sum_{j=(k,l)\in J}a_jY_{kl}(x)\] 
with the sequence $\{a_j\} $ the sequence of its coefficients in spherical harmonics expansion.  Note that for multipliers we allow distinct factors for spherical harmonics of the same order, one may  restrict the notion of multipliers only to the sequences $\lambda_j$ such that $\lambda_j=\lambda_{j'}$ when $j=(k,l)$ and $j'=(k,l')$, see for example \cite[Chapter 2]{DX} and references therein.  We  also identify the sequence $\lambda$ with a formal series
\[\lambda (x,y)= \sum_{j=(k,l)\in J}\lambda_jY_{kl}(x)Y_{kl}(y);\]
when the series converges  we get a function on  $\B\times\B$ harmonic in each variable. 
This series corresponds to the multiplier operator and  does not depend on the choice of  basis.

\subsection{Multipliers of spaces of analytic and harmonic functions} A number of natural operators on holomorphic and harmonic functions can be considered as Fourier coefficient multipliers.  Multipliers between various spaces of holomorphic functions have been studied by many authors; see for example \cite {Vu}, \cite {No} and \cite {BRV} for multipliers of
Hardy and Bergman spaces and \cite {Bl} for multipliers of more general spaces. J.~M. Anderson and A.~L.~Shields have described multipliers between the Bloch space and $\ell^p $ in \cite {AS}. The case of spaces of harmonic functions is different, for the classical unweighted space it corresponds to the difference between multipliers of $L^p$ and $H^p$. Multipliers in weighted spaces $\hv$ of harmonic functions were  studied in dimension two.
G. Bennett, D. A. Stegenga and R.~M.~Timoney determined the multipliers from $\hv $ to $\ell^p $ in \cite {BST} in the case where $g$ grows fast, for example when $g (x) =x^\alpha $ 
for $\alpha  >0 $, and this result was generalized to other weights with doubling in \cite{Ethesis}.  
It was proved by A.~L.~Shields and  D.~L.~Williams in \cite {SW} that  for the two-dimensional case and under some regularity assumptions on $g$ a sequence  $\lambda $ is a multiplier from $\hg$  to itself if and only if 
\begin {equation}
\label {m1}
\int_{0}^{2\pi}\left|\sum_{j=-\infinity}^{\infty} g(|j|)\lambda_jr^{|j|}e^{ij\ta} 
\right|d\ta \le C g\left(\fraction {1} {1 -r}\right).
\end {equation}

In this article we  give another proof of this result for weighted spaces of  harmonic functions in the unit ball of $\R^{N+1}$, where $N\ge 1$. We also show that for doubling weights  the regularity conditions are not essential, one can always replace the weight $g$ by a regular one without changing the corresponding space $h_g^\infty$.  Then we discuss how the space of multipliers depends on the weight $g$ and give a number of examples.  We also give a description of multipliers between some pairs of weighted spaces and show that weighted averaging along radii, studied in \cite{LM,E} for the case $N=1$  can be considered as a multiplier with $\lambda_{kl}\asymp g(k)^{-1}$. One of our main results, Theorem \ref{th:main}, gives a new description of the image of $h_g^p$ under the action of this multiplier.

\section {Characterization of functions in weighted spaces}
\subsection{Ces\`aro means}
We consider the standard difference operators on the space of sequences.
Let $\mathbf{b}=\{b_k\}_{k=0}^\infty $ be a sequence, then $\Delta\mathbf{b}$ is a new sequence defined by
\[(\Delta \mathbf{b})_k =b_k -b_{k +1} ,\]
 For $l=1,2,..$ we further define
 $\de^l \mathbf{b}=\de^{l -1} (\de \mathbf{b}),$
where $\Delta^1=\Delta$ and $\Delta^0=I$ is the identity operator.
Then
\[\de^l b_k =(\de^l\mathbf{b})_k=\sum_{j=0}^l (-1)^j {l \choose j}b_{k + j}. \]
For $m\in \R $ the Ces\`aro $(C,m) $ means of the sequence $\mathbf b=\{ b_k\} $ are defined by 
\[s_n^m (\mathbf{b})=\fraction1 {A_n^m}\sum_{k =0}^n A_{n -k}^m b_k, \]
where $A_k^m = {k +m\choose k} =(k+m)(k+m-1)...(m+1)/(k!)$. We have also $A_k^m\le C_m k^m$ for $k\in\N$. 

For $m =1,2,...$ the summation by parts formula is given by
\begin {equation}\label {summation by parts}
\sum_{k=0}^\infinity a_kb_k =\sum_{k=0}^\infinity \de^{m +1}a_{k}\sum_{j=0}^k A_{k -j}^{m}b_j =\sum_{k=0}^\infinity (\de^{m +1}a_{k}) A_{k}^{m}s_k^m(\mathbf{b}),
\end {equation}
if we assume that $\lim_{k \rightarrow \infinity} (\Delta^l a_k)A_k^{l}s_k^l (\mathbf{b})=0$ for $l =0,1,...,m $,
see for example Appendix~A.4 in \cite {DX}.

For each $k=0,1,..$ the zonal harmonic of order $k$ is defined by
\[Z_k (x,y) 
 =\sum_{l=0}^{\dim \hk} Y_{kl} (x)Y_{kl} (y),\]
and this function does not depend on the choice of  basis. The value of $Z_k(x,y)$  depends on $\langle x,y\rangle $ only, and we write $h_k (\langle x,y  \rangle )=Z_k (x,y)  $. Similarly to the usual Ces\`aro means in dimension two, we consider higher order Ces\`aro means of the expansions of harmonic polynomials in higher dimensions. An interesting observation due to Kogbetliantz \cite{Kog} is that the Ces\`aro $(C,m)$ means of the zonal harmonics are positive for $m\ge N$, see also  \cite [Theorem 2.4.3]{DX}. We define
$W_k^{m} (x,y)$ to be the Ces\`aro $(C,m) $ means of $ Z_k (x,y)$, i.e. 
\[W_k^{m} (x,y) =\fraction1 {A_k^m}\sum_{j =0}^k A_{k -j}^m Z_j ( x,y) . \]
 For $m\ge N $ we have $W_k^m( x,y)\ge 0$ and this yields
\begin{equation*}
\|W_k^m(\cdot, y)\|_1=\int_S|W_k^m(x,y)|ds(x)=\int_S W_k^m(x,y)ds(x)=\int_S Z_0(x,y)ds (x)=1.
\end{equation*} 
 for each $k$. Here and in what follows, the norms $\|\cdot\|_p$ are with respect to the normalized surface measure on the unit sphere of $\R^{N+1}$.  Moreover, $\sup_y\|W_k^m(\cdot, y)\|_1\le C$ when $m>(N-1)/2$, see \cite[Theorem 2.4.4]{DX} and references therein, in particular \cite{S,BC}. For the rest of the text we fix a positive integer $d$ such that $d>(N-1)/2$ (when $N=1$ one may choose $d=1$).

We will also need some auxiliary harmonic polynomials.  Let $a_m=\left(1-\fraction1m\right)^{-m},$ $ m\ge 0$, then   $a_m$ are uniformly bounded. Consider the function $q_m(t)=a_m^{t}$ on $0\le t\le 1 $ and define a continuation $q_m$ of this function for $t >1 $ such that $q_m(t) = 0$ for $t\ge 2 $ and the derivatives of $q_m$ up to order $d+1$ are bounded by some uniform constant $M=M(d)$ that does not depend on $m$. Let also $q_{m,n,k} =q_m (\fraction {k}n) $. Now define 
\begin{equation} \label {Q}
Q_{m,n} ( x,y ) =\sum_{k=0}^{2n} q_{m,n,k} Z_k (x,y) .
\end{equation}

\begin {lemma}
\label {q several}
There exists a constant  $C $ such that the functions  
$Q_{m,n} $ defined by \eqref{Q}
satisfy $\|Q_{m,n}(x,\cdot)\|_1=\|Q_{m,n}(\cdot,y) \|_1\le C$ 
 for all $m$ and $n$. 
\end {lemma}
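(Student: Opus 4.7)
The plan is to invoke the summation-by-parts identity~\eqref{summation by parts} with Ces\`aro order $m=d$ to rewrite $Q_{m,n}$ in terms of the Ces\`aro kernels $W_k^d$, for which a uniform $L^1$-estimate is already available from the discussion preceding the lemma. Since $Z_k(x,y)=Z_k(y,x)$, the kernel $Q_{m,n}$ is symmetric in its arguments and the two norm bounds in the statement are equivalent, so I will estimate $\|Q_{m,n}(\cdot,y)\|_1$ for fixed $y$. Setting $a_k=q_{m,n,k}$ and $b_k=Z_k(x,y)$, the hypothesis $q_m(t)\equiv 0$ for $t\ge 2$ forces $a_k=0$ whenever $k\ge 2n$, so every difference sequence $\Delta^l a_k$ has finite support and the limit-at-infinity conditions required by~\eqref{summation by parts} are satisfied trivially.

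Since by definition $\tfrac{1}{A_k^d}\sum_{j=0}^{k}A_{k-j}^d Z_j(x,y)=W_k^d(x,y)$, the identity~\eqref{summation by parts} gives
\[Q_{m,n}(x,y)=\sum_{k=0}^{\infty}(\Delta^{d+1}q_{m,n,\cdot})_k\,A_k^d\,W_k^d(x,y).\]
Taking the $L^1$-norm in $x$, applying the triangle inequality, and using the uniform bound $\sup_y\|W_k^d(\cdot,y)\|_1\le C$ (valid because we fixed $d>(N-1)/2$), I reduce the lemma to the numerical estimate
\[\sum_{k=0}^{\infty}\bigl|(\Delta^{d+1}q_{m,n,\cdot})_k\bigr|\,A_k^d\le C',\]
with $C'$ independent of $m$ and $n$.

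For this last inequality I use the discrete mean-value identity $\Delta^{d+1}q_{m,n,k}=(-1)^{d+1}n^{-(d+1)}q_m^{(d+1)}(\eta_k)$ for some $\eta_k\in[k/n,(k+d+1)/n]$. By construction the derivatives of $q_m$ up to order $d+1$ are uniformly bounded by $M$ and vanish identically on $[2,\infty)$, so $\bigl|(\Delta^{d+1}q_{m,n,\cdot})_k\bigr|\le Mn^{-(d+1)}$ and this quantity is zero for $k\ge 2n$. Combined with $A_k^d\le C_d k^d$, this gives
\[\sum_{k=0}^{\infty}\bigl|(\Delta^{d+1}q_{m,n,\cdot})_k\bigr|\,A_k^d\le CM\,n^{-(d+1)}\sum_{k=0}^{2n}k^d\le C'',\]
since $\sum_{k=0}^{2n}k^d\lesssim n^{d+1}$. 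The main obstacle, and the reason one must differ $d+1$ times, is the balance between the polynomial factor $A_k^d$ (producing $n^{d+1}$ after summation over roughly $2n$ terms) and the gain $n^{-(d+1)}$ from smooth differencing; these precisely cancel. Uniformity in $m$ is inherited directly from the $m$-independent derivative bounds that were built into $q_m$, and uniformity in $y$ is inherited from the uniform $L^1$-boundedness of $W_k^d$.
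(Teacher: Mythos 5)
Your proof is correct and follows essentially the same route as the paper: summation by parts to rewrite $Q_{m,n}$ as $\sum_k (\Delta^{d+1}q_{m,n,k})A_k^d W_k^d$, the uniform $L^1$-bound on $W_k^d$ for $d>(N-1)/2$, and the estimate $|\Delta^{d+1}q_{m,n,k}|\le Cn^{-(d+1)}$ (supported in $k\le 2n$) from the uniformly bounded derivatives of $q_m$, so that the sum $\sum_{k\le 2n}A_k^d n^{-(d+1)}$ is bounded. The only cosmetic difference is that you justify the finite-difference bound via an explicit discrete mean-value identity, where the paper simply asserts it.
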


\begin {proof} 
By \eqref {summation by parts} we get
\begin{equation*}
Q_{m,n} (x,y) =\sum_{k=0}^\infinity q_{m,n,k} Z_k (x,y)=\sum_{k=0}^\infinity \de^{d+1} q_{m,n,k} A_k^{d}W_k^{d}(x,y).
\end{equation*}
Since the functions $q_{m}$ defined above have bounded derivatives up to order $d+1$ and vanish for $t\ge 2$, we have $|\de^{d+1} q_{m,n,k} | \le C\fraction1{n^{d+1} }$, and $\de^{d+1} q_{m,n,k}=0$ for $k>2n$. Then
\begin{equation*}
\|Q_{m,n} ( \cdot ,y )\|_1 \le \sum_{k=0}^\infinity |\de^{d+1} q_{m,n,k} |A_k^{d}\|W_k^{d}( \cdot ,y )\|_1\\
\le C \sum_{k=0}^ {2n}\fraction{A_k^d}{n^{d+1}}\le C \sum_{k=0}^{2n}\fraction1 {n\cdot d!}\le C.
\end {equation*}
\end {proof}
We will be interested in two particular cases of these function, 
\begin{equation}\label{QR}
Q_n(x,y)=Q_{n,n}(x,y)\quad {\text{and}}\quad R_n(x,y)=Q_{0,n}(x,y).
\end{equation}
For a harmonic function $u$ in the unit ball we also define the operators
\begin{equation}\label{eq:R}
\mathcal{R}_nu(rx)=\int_S R_n(x,y)u(ry)ds(y),\quad |x|=1,\  0\le r<1.
\end{equation}
The definition depends on the smooth extension of the constant function $q_0$ from $[0,1]$ to $[0,2]$. In dimension two we may choose this extension such that the resulting operators are the de la Vall\'ee-Poussin sums of $u$. These operators in connection with growth spaces were used by W. Lusky in \cite{L1}.
\subsection{Characterization}
First we prove that the doubling condition implies a useful estimate. We note also that a milder restriction on the growth of the weight function than doubling would work here.

\begin{proposition} 
\label {estimate with A}
Let $g$ fulfill \eqref {gdouble}, then 
for any $m\ge 0$ we have 
$$\sum_{k=0}^\infty r^kA_k^mg(k)\le C_{m,D}g\left(\frac1{1-r}\right)(1-r)^{-m-1}.$$
\end{proposition}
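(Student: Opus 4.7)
The plan is to cut the sum at the scale $M := 1/(1-r)$ (which is $\ge 1$ since $r \in [0,1)$) and estimate the two pieces separately, using the classical generating-function identity $\sum_{k \ge 0} A_k^m r^k = (1-r)^{-m-1}$ on the low range and a dyadic exploitation of the doubling condition on the tail.

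For the low part $k \le M$, monotonicity of $g$ gives immediately
\[
\sum_{k \le M} r^k A_k^m g(k) \;\le\; g(M) \sum_{k=0}^\infty A_k^m r^k \;=\; g(M)(1-r)^{-m-1},
\]
which already has the desired form. For the tail I would group terms into dyadic blocks $B_j := \{k \in \N : 2^j M < k \le 2^{j+1} M\}$, $j \ge 0$. Three ingredients control each block: iterating $g(2x) \le D g(x)$ gives $g(k) \le D^{j+1} g(M)$ on $B_j$; the bound $A_k^m \le C_m k^m$ from the paper gives $A_k^m \le C_m (2^{j+1} M)^m$; and the standard inequality $(1 - 1/M)^M \le e^{-1}$, valid for all $M \ge 1$, yields
\[
r^k \;\le\; r^{2^j M} \;=\; \bigl(1 - \tfrac{1}{M}\bigr)^{2^j M} \;\le\; e^{-2^j}.
\]
Since $|B_j| \le 2^j M + 1 \le 2^{j+1} M$, the block contributes at most $C_m \,(2^{m+1} D)^j\, e^{-2^j}\, M^{m+1}\, g(M)$, up to a multiplicative constant depending only on $m$ and $D$.

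Summing in $j$, the super-exponential decay $e^{-2^j}$ easily beats the geometric factor $(2^{m+1} D)^j$, so the series $\sum_{j \ge 0} (2^{m+1} D)^j e^{-2^j}$ converges to a constant $C_{m,D}$. This produces a tail bound of $C_{m,D}\, M^{m+1} g(M) = C_{m,D}\, g(1/(1-r))\,(1-r)^{-m-1}$, and combining with the low estimate completes the proof.

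The proof is really just careful bookkeeping; the only thing to watch is that the super-exponential decay of $r^{2^j M}$ has to absorb three geometric growth factors at once (from iterated doubling $D^j$, from the binomial coefficient $2^{jm}$, and from the cardinality of the block $2^j M$), and that $(1 - 1/M)^M \le e^{-1}$ is only applied for $M \ge 1$, which is automatic. As the author remarks, the doubling hypothesis is used only through the finite-step iteration $g(2^{j+1} M) \le D^{j+1} g(M)$, so any polynomial-type growth bound on $g$ along a dyadic chain would suffice.
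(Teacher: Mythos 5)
Your proof is correct and follows essentially the same route as the paper's: split the sum at the scale $N\sim 1/(1-r)$, group the tail into dyadic blocks, iterate the doubling condition to get $g(2^{j+1}N)\le D^{j+1}g(N)$, and let the super-exponential decay $r^{2^jN}\le e^{-2^j}$ absorb all the geometric factors. The only (cosmetic) differences are that you invoke the exact generating-function identity $\sum_k A_k^m r^k=(1-r)^{-m-1}$ for the low range where the paper uses the cruder bounds $A_k^m\le C_m N^m$ and $\sum_{k\le N}r^k\le (1-r)^{-1}$, and that you bound each block by its cardinality times its maximum rather than by $\sum_i r^i\le (1-r)^{-1}$.
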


For $m=1$ this inequality can be found in \cite{SW}.

\begin {proof}
Let $r $ be given and choose $N $ such that $1 -\fraction1 {N} <r\le1 -\fraction1 {N+1} $. Then
\[
\sum_{k=0}^{N} r^kA_k^mg(k)\le g (N) A_{N}^{m}\sum_{k=0}^{N } r^k\le C_m g (N) N^{m}\frac1{1-r} 
\le C_mg\left(\frac1{1-r}\right)(1-r)^{-m-1}
\]
and the rest $\sum_{k =N+1 }^\infinity r^kA_k^mg(k)$ can be written as
\begin {multline*}
\sum_{j =0}^\infinity r^ {2^jN}\sum_{i = 1}^{2^jN} r^iA_{2^jN +i}^mg(2^jN +i)
\le \frac{C_m}{1-r}\sum_{j =0}^\infinity r^ {2^jN}g(2^{j+1}N)(2^{j+1}N)^m\\
\le \frac{C_m N^mg (N)}{1-r}\sum_{j =0}^\infinity e^{-2^j}D^{j+1}2^{(j+1)m}
\le C_{m,D} g\left(\frac1{1-r}\right)(1-r)^{-m-1},
\end {multline*}
the required inequality follows.
\end {proof}

Let $u$ be  a harmonic function in the unit ball,  $u(x) =\sum_{j=0}^\infty \sum_{l=1}^ {L_j} a_{jl}Y_{jl}(x) $. The 
Ces\`aro $(C,m) $ means of $u $ are denoted by 
\[\sigma_n^m u (x)=\fraction1 {A_n^m}\sum_{j =0}^n A_{n -j}^m \sum_{l=1}^ {L_j}a_{jl}Y_{jl}(x). \]
For a polynomial of the form $P_n (x,y) =\sum_{j=0}^{n}\sum_{l=0}^{L_j} c_{jl} Y_{jl}(x)Y_{jl}(y)$, we write
\[u*P_n(x)=\int_{S} P_n (x',y)u (ry)ds(y)=\sum_{j=0}^\infty\sum_{l=1}^{L_j}a_{jl}c_{jl}Y_{jl}(x),\]
where  $x=rx', x'\in\s^N$.
In particular, $\sigma_n^m u=u*W_n^m$.

In \cite {EMM} we showed that harmonic functions in the upper half-space of $\R^{N+1} $ that are bounded by a majorant can be characterized by the size of their convolutions with functions having compactly supported Fourier transforms. For functions on the ball this corresponds to estimates of the convolutions with polynomials. The statement below ((a) equivalent to (c)) also generalizes the result for weighted  spaces of harmonic functions on the unit disk proved in \cite{BST}, to weighted spaces in several dimensions.

\begin {theorem}
\label {restriction d}
Let $u(x) =\sum_{j=0}^\infty \sum_{l=0}^ {L_j} a_{jl}Y_{jl}(x) $ be a harmonic function on on the unit ball of $\R^{N+1}$, $1\le p\le \infty$, and let $g$ be a weight function satisfying the doubling condition and $d>(N-1)/2$. Then the following are equivalent:\\
(a) $u\in h_g^p $, \\ 
(b) There exists a constant $C $ such that
$\|u*P_n\|_p\le C \sup_{x}\|P_n(x,\cdot)\|_1 g(n)$
for   any polynomial of the form $P_n (x,y) =\sum_{j=0}^{n}\sum_{l=0}^{L_j} c_{jl} Y_{jl}(x)Y_{jl}(y)$, \\ 
(c) $\|\sigma_n^{d}u\|_p\le Cg (n) $,\\
(d) $\|\mathcal{R}_nu\|_p\le Cg(n)$.
\end {theorem}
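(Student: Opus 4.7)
The plan is to prove (a) $\Rightarrow$ (b), derive (c) and (d) as specializations of (b), and close the loop via (c) $\Rightarrow$ (a); the implication (d) $\Rightarrow$ (c) is then immediate from the operator identity $\sigma_n^d \mathcal{R}_n u = \sigma_n^d u$ combined with $\|W_n^d(x,\cdot)\|_1 \le C$. The central device is that the polynomial $Q_n = Q_{n,n}$ from \eqref{QR} inverts the dilation $v \mapsto v(r\cdot)$, for $r = 1 - 1/n$, on harmonic polynomials of degree at most $n$: the choice $a_n = (1-1/n)^{-n}$ forces $q_{n,n,k} = r^{-k}$ for $k \le n$, so spherical harmonic orthogonality gives $v = Q_n * v(r\cdot)$ on $S$, and Lemma \ref{q several} then renders this inversion uniformly bounded in every $L^p$.

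For (a) $\Rightarrow$ (b), given $P_n$, I set $r_n = 1 - 1/n$ and form the composed kernel
\[
M_n(x,y) = \int_S Q_n(x,z)\, P_n(z,y)\, ds(z),
\]
whose spherical harmonic expansion reduces by orthogonality to $\sum_{j \le n, l} r_n^{-j} c_{jl} Y_{jl}(x) Y_{jl}(y)$. A second orthogonality computation then identifies $(u*P_n)(x) = (u_{r_n} * M_n)(x)$ on $S$. The Minkowski estimate for compositions of symmetric integral kernels bounds $\sup_x \|M_n(x,\cdot)\|_1 \le \sup_x \|Q_n(x,\cdot)\|_1 \cdot \sup_z \|P_n(z,\cdot)\|_1 \le C \sup_z \|P_n(z,\cdot)\|_1$ by Lemma \ref{q several}, and since (a) supplies $\|u_{r_n}\|_p \le C g(n)$ the conclusion (b) follows. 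Specializing $P_n = W_n^d$ with the $L^1$ bound from \cite[Theorem 2.4.4]{DX} yields (c), and specializing $P_n = R_n = Q_{0,n}$ with another application of Lemma \ref{q several} yields (d).

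For (c) $\Rightarrow$ (a) I apply the summation-by-parts formula \eqref{summation by parts} with $a_k = r^k$ and $b_k = \sum_l a_{kl} Y_{kl}(x)$. Since $\Delta^{d+1}(r^k) = (1-r)^{d+1} r^k$ and $s_k^d(\mathbf{b}) = \sigma_k^d u(x)$, this produces the representation
\[
u(rx) = (1-r)^{d+1} \sum_{k=0}^\infty r^k A_k^d\, \sigma_k^d u(x).
\]
Taking $L^p(S)$ norms, inserting the hypothesis (c), and applying Proposition \ref{estimate with A} with $m=d$ then gives $\|u_r\|_p \le C g(1/(1-r))$. For (d) $\Rightarrow$ (c), the composed kernel $R_n * W_n^d$ coincides with $W_n^d$ because $q_{0,n,k}=1$ on the support $k \le n$ of $W_n^d$, so $\sigma_n^d u = \mathcal{R}_n u * W_n^d$ and Young's inequality closes the argument.

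The main technical point to address is justifying the summation-by-parts identity in (c) $\Rightarrow$ (a): the hypothesis controls only $\sigma_k^d u$ and not the lower-order Ces\`aro means $\sigma_k^l u$ for $l < d$ that appear in the vanishing conditions required by \eqref{summation by parts}. I plan to circumvent this by first applying the identity to the polynomial truncations $u^{(N)} = \sum_{k \le N}\sum_l a_{kl}Y_{kl}$, where it is simply a finite sum, and then letting $N \to \infty$ in $L^p(S)$, using that (c) together with Proposition \ref{estimate with A} makes the right-hand series absolutely convergent uniformly in $N$.
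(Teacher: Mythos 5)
Your proposal is correct and follows essentially the same route as the paper: (a)$\Rightarrow$(b) via composing $P_n$ with the kernel $Q_n=Q_{n,n}$ to undo the dilation $u\mapsto u(r_n\cdot)$ and invoking Lemma \ref{q several}, (b)$\Rightarrow$(c),(d) by specializing to $W_n^d$ and $R_n$, (c)$\Rightarrow$(a) by summation by parts together with Proposition \ref{estimate with A}, and closing the loop from (d) using that $R_n$ acts as the identity on polynomials of degree at most $n$ (the paper returns to (b) rather than (c), an immaterial difference). Your extra care with the boundary terms in the summation-by-parts identity, handled by truncation, is a sound refinement of a point the paper passes over silently.
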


\begin{proof} First, we show that (a) implies (b).
Suppose that $P_n$ is a polynomial as in (b), and 
let $Q_n=Q_{n,n} $ be the function in Lemma~\ref{q several}. Let also $t=1 -1/n $, then $q_{n,j} =t^{-j} $ for $j\le n $.
 We consider a new polynomial
\[
\tilde{P}_n(x,y)= \sum_{k=0}^n \sum_{m=1}^ {L_k}
  c_{km} t^{-k}Y_{km} (x)Y_{km}(y).\]
Then, applying Lemma \ref{q several}, we obtain
\begin{multline*}
\sup_x\|\tilde{P}_n(x,\cdot)\|_1=\sup_x\int_{S}\left| \int_{S}\sum_{j=0}^n \sum_{l=1}^ {L_j} 
c_{jl}Y_{jl} (x)Y_{jl}(z) \sum_{k=0}^{\infty} 
q_{n,k}Z_k (z,y) ds(z)\right|ds(y)\\=\sup_x\int_S\left|\int_S P_n(x,z)Q_n(z,y)ds(z)\right|ds(y) 
\le C \sup_x\|P_n(x,\cdot)\|_1.
\end{multline*}

Now let $u\in h^p_g$, then using $S_{t}=\sum_{k=0}^\infty t^kZ_k$, we get
\begin {multline*} 
\|u*P_n\|_p^p=\|u*S_t*\tilde{P}_n\|_p^p\le \\
\le  C \left(g \left(\fraction1 {1 -t} \right)\right)^p\sup_x\|\tilde{P}_n(x,\cdot)\|_1^p\le Cg(n)^p\sup_x\|P_n(x,\cdot)\|_1^p.
\end {multline*}
Hence (b) follows.

It is clear that  (b) implies (c) and (d); we take convolutions with $W_n^{d}(x,y)$ and $R_n(x,y)$ (see \eqref{QR} and Lemma \ref{q several})
and we are done since the $L^1$-norms of $W_n^{d}(x,\cdot) $ and $R_n(x,\cdot)$ 
are bounded uniformly on $n $ and $x\in\Sm$.

To finish the proof we show that (c) implies (a) and (d) implies (c).
First assume  (c). By \eqref{summation by parts},
\begin {equation*} 
u(rx) =\sum_{j=0}^\infty \sum_{l=0}^ {L_j} a_{jl}r^{j}Y_{jl}(x) = (1  -r)^{d+1 }\sum_{k =0}^\infinity r^kA_k^{d}\sigma_k^{d}u (x).
\end {equation*}
Then (a) and  Proposition \ref {estimate with A} imply
$\|u(rx)\|_p \le (1  -r)^{d+1}\sum_{k =0}^\infinity r^kA_k^{d}g (k)\le C g (\fraction1 {1 -r})$, and then $u\in h_v^p $.

Finally, assume that (d) holds. Then $u*P_n=\mathcal R_n u*P_n$ and (b) follows since $\|u*P_n\|_p\le \|\mathcal R_n u\|_p\sup_x\|P_n(x,\cdot)\|_{L^1}.$

\end {proof}

For the case $N=1$ and $1<p<+\infty$ (a) is equivalent to (c) with $d=0$, see \cite{Do}. This does not hold for $p=+\infty$, see \cite{BST}, the choice of optimal $d$ for particular $p$ is a delicate question, see \cite[Chapter 2.5]{DX} for a related discussion.

\section {Regular multipliers}
\label {M}
\subsection{Some multipliers from $h_g^p$  to $h_{\gt}^p$} Let $f $ be a positive, increasing 
function on $[1,\infinity)
 $ which has derivatives up to order $d$ and satisfies \eqref{gdouble}. 
We also assume that there exists a constant $C >0 $ such that 
\begin {equation}\label {regularity}
|f^{(m)} (t) |\le C \fraction {f^{'} (t)} {t^{m -1}} 
\end {equation}
for $1\le m\le d+1$.
This assumption concerns the regularity of $f $ and does not restrict the growth further. Our usual examples $f (x)=x^\alpha $ and $f(x)=\left(1+\log x\right)^\alpha$ for $\alpha>0 $ fulfill \eqref{regularity}. In the next subsection we also show that for  any weight $g$ satisfying  the doubling condition one can construct an equivalent weight $f$ such that $f(n)\asymp g(n)$ and $f$ satisfies \eqref{regularity}.
Let $u(x) =\sum_{j=0}^\infty \sum_{l=1}^ {L_j} a_{jl}Y_{jl}(x) $  and
define an operator $H_f $ by
$$H_f u (x) =\sum_{j=0}^\infty \sum_{l=1}^ {L_j} f(j)a_{jl}Y_{jl}(x).$$ 
This operator can also be considered as an operator on sequences by the identification of functions with their coefficients. 
We also define an operator $H_f^{-1}$ by
$$H_f^{-1} u (x) =\sum_{j=0}^\infty \sum_{l=1}^ {L_j} \fraction {a_{jl}} {f (j)}Y_{jl}(x). $$ 
We will now prove the following:

\begin {theorem}\label{multiplication}
Assume $f $ and $g $ satisfy \eqref{gdouble} 
and $f $ satisfies 
\eqref {regularity}, $1\le p\le\infty$. \\ 
(a) Let  
$u \in h^p_g $,  
	then 	$H_fu\in h^p_{fg}$. \\
(b) Assume there exists $\ve >0 $ 
such that $g(n)/f^{1 +\ve}(n) \nearrow\infinity $  
when $n\rightarrow\infinity $ and let $u \in h^p_g $,  
	then   
$H_f^{-1}u\in h^p_{g/f} $. \\ 
(c) If $g/f $ is non-decreasing and $u\in h^p_g $, then $H_f^{-1}u\in h^p_{(g/f)\log f} $.  
\end {theorem}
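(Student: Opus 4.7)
The plan is to reduce all three parts to the Ces\`aro-means characterization of $h^p_g$ given by Theorem \ref{restriction d}, using the summation-by-parts identity \eqref{summation by parts} as the main technical tool. A preliminary observation is that \eqref{regularity} applied with $m=2$ (giving $|f''|\le Cf'/t$, hence that $f'$ is itself doubling), combined with the doubling of $f$, yields the pointwise bound $tf'(t)\le Cf(t)$; this will be used repeatedly.

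For part (a), I would use that $\sigma_n^d$ commutes with $H_f$ on the coefficient level, so $\sigma_n^d(H_fu)=H_f(\sigma_n^du)$. Acting on a spherical polynomial of degree $\le n$, the operator $H_f$ coincides with convolution against the kernel
$$Q_n^f(x,y)=\sum_{k=0}^\infty f(k)\eta(k/n)Z_k(x,y),$$
where $\eta\in C^\infty$ equals $1$ on $[0,1]$ and vanishes on $[2,\infty)$. By Schur's test (thanks to the symmetry $Q_n^f(x,y)=Q_n^f(y,x)$), the estimate $\|H_fP\|_p\le Cf(n)\|P\|_p$ reduces to proving $\sup_z\|Q_n^f(z,\cdot)\|_1\le Cf(n)$; this follows from summation by parts as in Lemma \ref{q several}: expanding $\Delta^{d+1}(f(k)\eta(k/n))$ by the Leibniz rule and using \eqref{regularity} together with $tf'\le Cf$ yields $\sum_k A_k^d|\Delta^{d+1}(f(k)\eta(k/n))|\le Cf(n)$. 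Combined with $\|\sigma_n^du\|_p\le Cg(n)$, this proves (a).

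For parts (b) and (c), I would apply \eqref{summation by parts} directly to $\sigma_n^d(H_f^{-1}u)$ with $\alpha_k=A_{n-k}^d/f(k)$ for $k\le n$ and $\alpha_k=0$ otherwise, obtaining
$$\|\sigma_n^d(H_f^{-1}u)\|_p\le\frac{C}{A_n^d}\sum_{k=0}^\infty|\Delta^{d+1}\alpha_k|A_k^dg(k).$$
Leibniz decomposes $\Delta^{d+1}\alpha_k$ into terms $|\Delta^iA_{n-k}^d|\cdot|\Delta^{d+1-i}(1/f)(k)|$ for $i=0,\dots,d$, with $|\Delta^iA_{n-k}^d|\lesssim(n-k)^{d-i}$. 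The key input is the sharp Fa\`a di Bruno bound
$$\left|\Delta^m(1/f)(k)\right|\le\frac{Cf'(k)}{k^{m-1}f(k)^2},$$
which follows from \eqref{regularity} and $tf'\le Cf$ since the single-block partition dominates in Fa\`a di Bruno (multi-block contributions carry extra factors of $(tf'/f)^{j-1}$, uniformly bounded by $tf'/f\le C$). The dominant ($i=0$) term produces the sum $\sum_k(n-k)^dg(k)f'(k)/f(k)^2$; under hypothesis (c), $g(k)/f(k)\le g(n)/f(n)$ for $k\le n$ reduces this to $(g(n)/f(n))\sum_{k=1}^n(n-k)^df'(k)/f(k)$, whose dominant part for $k\le n/2$ is controlled by $n^d\int_1^nf'(t)/f(t)\,dt=n^d\log f(n)$, giving (c). Under hypothesis (b), the additional factor $(f(k)/f(n))^\varepsilon$ makes the change of variables $u=f(t)$, $du=f'(t)\,dt$ produce the convergent integral $\int_1^{f(n)}u^{\varepsilon-1}\,du\le f(n)^\varepsilon/\varepsilon$, eliminating the logarithm and yielding (b).

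The main obstacle is establishing the sharp Fa\`a di Bruno bound $|\Delta^m(1/f)(k)|\le Cf'(k)/(k^{m-1}f(k)^2)$: one must verify that the single-block partition dominates all other partitions in the formula for $(1/f)^{(m)}$ under \eqref{regularity}, via the uniform bound $(tf'/f)^{j-1}\le C$. A secondary technical point is the treatment of the boundary range $k\in(n-d,n]$, where $\alpha_k$ transitions sharply to zero and the Leibniz estimate does not directly apply; the contribution from these $O(d)$ boundary terms is of order $g(n)/f(n)$ and is absorbed into the main bound.
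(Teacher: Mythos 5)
Your proposal is correct and follows essentially the same route as the paper's proof: reduction to the Ces\`aro-means characterization of Theorem \ref{restriction d}, summation by parts \eqref{summation by parts}, a Leibniz-plus-\eqref{regularity} estimate of the $(d+1)$-st differences of $A_{n-j}^{d}f(j)^{\pm 1}$, and the monotonicity hypotheses on $g/f^{1+\ve}$ resp.\ $g/f$ to close the sum. The only differences are in execution rather than strategy --- in (a) you factor $\sigma_n^{d}H_f=H_f\sigma_n^{d}$ and bound the kernel $\sum_k f(k)\eta(k/n)Z_k$ (which is exactly the polynomial $T_{n,f}$ the paper constructs later for Theorem \ref{th:main}), and in (b), (c) you replace the paper's block decomposition along the sequence \eqref{n} by the integral comparisons $\sum f'/f\approx\log f$ and $\sum f'f^{\ve-1}\approx f^{\ve}/\ve$; moreover your derivation of $|\Delta^{m}(1/f)(k)|\le Cf'(k)/(k^{m-1}f(k)^2)$ via Fa\`a di Bruno and $tf'(t)\le Cf(t)$, and your explicit handling of the boundary indices $k\in(n-d,n]$, correctly supply details the paper only asserts.
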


In \cite {SW} parts (a) and (b) were proved in the two-dimensional case.
The proof given there is based on a duality construction which involves the measure whose moments coincide with $f(n)$. Our proof is different and employs
Theorem ~\ref {restriction d}.

\begin {proof}
(a) By Theorem \ref {restriction d} and \eqref {summation by parts},
\begin {multline*}
\|\sigma_n^{d} (H_fu)\|_p\le Cg (n)\left\|\fraction 1 {A_n^{d}}\sum_{j=0}^{n} A_{n -j}^{d}f (j)Z_j (x,\cdot)\right\|_1\\
\le Cg (n)\left\|\fraction 1 {A_n^{d}}\sum_{j=0}^{n} \Delta^{d+1} ( A_{n -j}^{d}f (j))\sum_{k =0}^j A_{j -k}^{d} Z_k (x,\cdot)\right\|_1\\
\le Cg (n)\sum_{j=0}^{n} \fraction{\Delta^{d+1} ( A_{n -j}^{d}f (j))A_j^{d}}{A_n^{d}}\left\|W_j^{d}(x,\cdot)\right\|_1
\le Cg (n)\sum_{j=0}^{n} \fraction {\Delta^{d+1} ( A_{n -j}^{d}f (j))A_j^{d}}{A_n^{d}},
\end {multline*}
so if we show that 
\begin {equation}\label {fn}
\sum_{j=0}^{n}\fraction { \Delta^{d+1} ( A_{n -j}^{d}f (j))A_j^{d}}{{A_n^{d}}}\le Cf (n) ,
\end {equation}
 we are done since Theorem \ref {restriction d} 
 then implies that $H_fu\in h^p_{gf} $.

 Define a function $h_n$ by
$$h_n (t)=M_{n,d} (t)f (t) =\left(1 -\fraction t {n +d}\right)\left (1 -\fraction t {n+ d -1}\right) ...\left (1 -\fraction t {n +1}\right)f (t)$$
for $0\le t\le n +d +1 $.
Then $h_n (j) =A_{n -j}^{d}\left(A_{n}^{d}\right)^{-1}f (j) $.
Further, it is easy to check that for $t\le n +d+1 $ we have  $|M_{n,d}^{(m)} (t)|\le C_{m,d}\, n^{-m} $ when $m\le d $ and $M_{n,d}^{(d+1)} (t)=0$.
Then, applying
\eqref {regularity}, we get 
\begin{multline}\label {derivative h}
|h_n^{(d+1)} (t)| \le\sum_{j=0}^{d+1} {d+1\choose j} |M_{n,d}^{(j)} (t)||f^{(d+1 -j)} (t)|\\
\le C_{d}\sum_{j=1}^{d} {d+1\choose j}\fraction1 {n^j}\fraction {f^{'} (t)} {t^{d -j}} \le C_{d}\fraction {f'(t)} {t^{d}} 
\end{multline}
for $t\le n +d +1$.
We note that $\Delta^{d+1}  (A_{n -j}^{d}f (j)) /A_n^{d}
$ is equal to the divided differences $h_n [j,j +1,...,j +d+1]$ of the function $h_n$ and express it on the  Peano form,
\[
\Delta^{d+1} (A_{n -j}^{d}f (j)) /A_n^{d } =h_n [j,j +1,...,j +d+1] =\fraction1 {(d+1)!}\int_j^{j +d+1} h_n^{(d+1)}(t)B_{d} (t)d t,
\]
for some B-spline $B_{d} $.
Then by \eqref{derivative h},
\begin {multline*}
|\Delta^{d+1} (A_{n -j}^{d}f (j)) /A_n^{d} | 
\le \fraction1 {(d+1)!}\int_j^{j +d+1}| h_n^{(d+1)}(t)| |B_{d} (t)|d t\\
\le C\int_j^{j +d+1}\fraction {f'(t)} {t^{d}}d t
\le C\fraction {f(j +d+1)-f (j)} {j^{d}}.
\end {multline*}
Hence
\[
\sum_{j=0}^{n}\fraction { \Delta^{d+1} ( A_{n -j}^{d}f (j))A_j^{d}}{{A_n^{d}}}
\le C \sum_{j=0}^{n}j^{d} \left(\fraction {f(j +d+1)-f (j)} {j^{d}}\right)\le Cf (n)
\]
and \eqref {fn} is proved.

(b)
 Applying Theorem \ref {restriction d} and \eqref {summation by parts}, we obtain similarly to (a)
\begin {multline*}
\|\sigma_n^{d} (H^{-1}_fu)\|_p
=\left\|\fraction 1 {A_n^{d}}\sum_{j=0}^{n} \Delta^{d+1} \left( \fraction {A_{n -j}^{d}} {f (j)}\right)\sum_{k =0}^j A_{j -k}^{d}\sum_{l=1}^ {L_k} a_{jl}Y_{jl} \right\|_p\\
\le \fraction 1 {A_n^{d}}\sum_{j=0}^{n} \Delta^{d+1} \left( \fraction {A_{n -j}^{d}} {f (j)}\right)A_j^{d}\left\|\sigma_j^{d}u \right\|_p
\le C\fraction 1 {A_n^{d}}\sum_{j=0}^{n} \Delta^{d+1} \left( \fraction {A_{n -j}^{d}} {f (j)}\right) A_j^{d}g (j).
\end {multline*}
Similarly to the proof of (a) we define a function $p_n (t) =\fraction {M_{n,d} (t)} {f (t)} $ and  estimate  the corresponding coefficients $\fraction 1 {A_n^{d}}\Delta^{d+1} \left( \fraction {A_{n -j}^{d}} {f (j)} \right)$. It can be shown, by using \eqref{regularity}, that 
$$\left|\left (\fraction {1} {f (t)}\right)^{(m)}\right|\le C \fraction {f' (t)} {t^{m -1}f (t)^2} , $$
where the constant also depends on the constant in  \eqref{regularity}.
 Then applying the estimates for $|M_{n,j}^{(j)} (t)| $ given in the proof of (a), we get for $t\le n +d+1$
\[
|p_n^{(d+1)} (t)|
\le C_{d}\sum_{j=1}^{d} {d+1\choose j}\fraction1 {n^j}\fraction {f^{'} (t)} {t^{d -j}f (t)^2} \le C_{d}\fraction {f'(t)} {t^{d}f (t)^2} .
\] 
And, using the formula for divided differences once again, we get 
\begin{multline*}
\left|\Delta^{d+1} \left  (\fraction {A_{n -j}^{d}}{f (j)}\right) \fraction1 {A_n^{d}} \right| 
\le \fraction1 {(d+1)!}\int_j^{j +d+1}| p_n^{(d+1)}(t)| |B_{d} (t)|d t\\
\le C_d\int_j^{j +d+1} \fraction{f'(t)} {t^{d}f (t)^2}d t
\le C_d\fraction {1} {j^{d}}\left (\fraction1 {f(j)}-\fraction1 {f (j+d+1)}\right).
\end{multline*}
  Let  $n_0 =1 $ 
and for some $A>1$ 
define $n_k $ by induction as 
\beq
\label {n}
n_{k+1}=\min \{l \in \N: f(l)\ge A f(n_k)\},\eeq
then 
$A f (n_k)\le  f (n_{k+1})\le AD f (n_k)$, 
where $D $ is the constant from \eqref{gdouble}.
Let $N $ be such that $n_{N -1} <n \le n_{N}$, then applying the inequality above and monotonicity of $g/f^{1+\ve}$, we get
\begin {multline} 
\label {final part}
\sum_{j=0}^{n} \Delta^{d+1} \left(\fraction { A_{n -j}^{d}} {f (j)}\right)\fraction {A_j^{d}g (j)}{{A_n^{d}}}
\le C_d \sum_{j=0}^{n}g(j)\left (\fraction1 {f(j)}-\fraction1 {f (j+d+1)}\right)\\
\le C_d\fraction {g (n)} {f (n)^{1 +\ve}}\sum_{j=0}^{n}\frac{f(j +d+1) -f (j)}{f (j)^{1 -\ve}}\\
\le C_d\fraction {g (n)} {f (n)^{1 +\ve}}\sum_{k=0}^{N-1} \sum_{j =n_{k} +1}^{n_{k+1}} \frac{f(j +d+1) -f(j )}{f(n_k)^{1-\ve}}\\ 
\le C_{d,D}A^{1-\ve}\fraction {g (n)} {f (n)^{1 +\ve}}\sum_{k=0}^N f (n_{k+1})^{\ve} \le C_{d,D}A\fraction {g (n)} {f (n)},
\end {multline}
and we are done.

(c) The proof proceeds exactly like in (b), but we replace \eqref {final part} by the following:
\begin {multline*} 
\sum_{j=0}^{n} \Delta^{d+1} \left(\fraction { A_{n -j}^{d}} {f (j)}\right)\fraction {A_j^{d}g (j)}{{A_n^{d}}}
\le C_d\fraction {g (n)} {f (n)}\sum_{j=0}^{n}j^{d}\fraction {f (j)} {j^{d}}\left (\fraction1 {f(j)}-\fraction1 {f (j+d+1)}\right)\\
\le C_d\fraction {g (n)} {f (n)}\sum_{j=0}^{n}\frac{f(j +d+1) -f (j)}{f (j)}\le C_d\fraction {g (n)} {f (n)}\sum_{k=0}^{N-1}  \sum_{j =n_{k} +1}^{n_{k+1}} \frac{f(j +d+1) -f(j )}{f(n_k)}\\
\le C_{d,D}A\fraction {g (n)} {f (n)}N\le C_{d,D}A\fraction {g (n)\log f (n)} {f (n)\log A}.
\end {multline*} 
\end {proof}

\subsection{Sharpness of the result} We give some examples showing that part (b) of  Theorem \ref {multiplication} does not hold without the assumption $g(n)/f^{1+\ve}(n) \nearrow\infinity $ and that the weight in part (c) is sharp. We consider the case $p=\infty$.

A harmonic function on the disk $\D $ is called a Hadamard gap series if 
\begin {equation}
\label {Hadamard}
u (z)=\Re\sum_{k=1}^\infty a_{n_k}z^{n_k},\quad a_{n_k}\in\C,
\end {equation}
where $n_{k+1}>\lambda n_k$ for some $\lambda>1$. 
In \cite {E} Hadamard gap series were characterized in the following way: 

\begin {facta}
 Let $\{n_k\}_{k=1}^\infty$ be a sequence of positive integers such that $n_{k+1}\ge \lambda n_k$ for each $k$, where $\lambda>1$. Assume $g $ satisfies  (\ref{gdouble}) and let $u $ be given by \eqref {Hadamard}
 where the series converges in the unit disk.
Then 
$u\in\hg $ if and only if there exists $C$ such that 
$$\sum_{n_k\le N}|a_{n_k}|\le C\, g(N)$$
for any $N\in \N$.
\end {facta}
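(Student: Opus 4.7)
The plan is to prove the two implications separately: the sufficiency rests on a dyadic decomposition of the series combined with the doubling of $g$, while the necessity is the classical Sidon inequality for lacunary trigonometric series applied on each circle $|z|=r$.

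For the sufficiency direction, assume $\sum_{n_k\le N}|a_{n_k}|\le C g(N)$ for every $N$. Fix $|z|=r<1$ and choose the integer $N$ with $1-1/N<r\le 1-1/(N+1)$, so $N\asymp 1/(1-r)$. Split
\[
|u(z)|\le \sum_{n_k\le N}|a_{n_k}|r^{n_k}+\sum_{n_k>N}|a_{n_k}|r^{n_k}.
\]
The first sum is at most $Cg(N)\ls g(1/(1-r))$ by the hypothesis and doubling. For the tail, group the indices into dyadic blocks $B_j=\{k:2^jN<n_k\le 2^{j+1}N\}$, $j\ge 0$, and estimate
\[
\sum_{k\in B_j}|a_{n_k}|r^{n_k}\le r^{2^jN}\sum_{n_k\le 2^{j+1}N}|a_{n_k}|\le CD^{j+1}g(N)r^{2^jN},
\]
using doubling $(j{+}1)$ times. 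Since $r^N\le e^{-N(1-r)}\le e^{-c}$ for a uniform $c>0$, we have $r^{2^jN}\le e^{-c\,2^j}$, so the sum over $j$ converges and is bounded by $C'g(1/(1-r))$. This yields $u\in\hg$. Note that lacunarity of $(n_k)$ is not used here; it will enter only in the other direction.

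For the necessity direction, I would invoke Sidon's inequality for Hadamard lacunary trigonometric series: there exists $c_\lambda>0$, depending only on $\lambda$, such that
\[
\sum_k |b_k|\le c_\lambda\, \Bigl\|\operatorname{Re}\sum_k b_k e^{in_k\theta}\Bigr\|_{L^\infty(\T)}
\]
for every finitely supported sequence $(b_k)\subset\C$, provided $n_{k+1}/n_k\ge\lambda$. Applied to $u_r(\theta):=u(re^{i\theta})=\operatorname{Re}\sum_k a_{n_k}r^{n_k}e^{in_k\theta}$ (viewed as a limit of lacunary trigonometric polynomials), this yields
\[
\sum_k |a_{n_k}|r^{n_k}\le c_\lambda\|u_r\|_\infty \le c_\lambda K\, g\!\left(\tfrac{1}{1-r}\right).
\]
Choosing $r=1-1/N$, the elementary bound $(1-1/N)^N\ge 1/(2e)$ for $N\ge 2$ gives $r^{n_k}\gs 1$ for all $n_k\le N$, hence $\sum_{n_k\le N}|a_{n_k}|\ls g(N)$, as required.

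The main obstacle is the necessity step, whose backbone is Sidon's inequality---a classical but genuinely non-trivial theorem, usually proved by a Riesz-product construction. The sufficiency argument, by contrast, is a routine variant of the dyadic estimate already carried out in the proof of Proposition~\ref{estimate with A}, relying only on the coefficient hypothesis and doubling. As an alternative to Sidon, one could try to extract the coefficients $a_{n_k}$ one by one from the characterization of $\hg$ via Ces\`aro means (Theorem~\ref{restriction d}) together with lacunary test polynomials, but this reduces essentially to re-proving Sidon's inequality, so citing it is the cleanest route.
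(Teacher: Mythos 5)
The paper does not actually prove this statement: it is quoted as ``Theorem A'' from the reference \cite{E} and used only to build counterexamples in the sharpness discussion, so there is no in-text proof to compare yours against. On its own merits your argument is correct and is the standard route to such a characterization. The sufficiency half is exactly the dyadic-block estimate underlying Proposition \ref{estimate with A}: splitting at $N\asymp(1-r)^{-1}$, bounding the head by the coefficient hypothesis, and controlling the tail by $\sum_j D^{j+1}g(N)e^{-c2^j}<\infty$; note that only monotonicity of $g$ is needed for the head and doubling only for the tail, and lacunarity plays no role here, as you observe. The necessity half via Sidon's inequality is also sound; two small points are worth making explicit. First, $u_r(\theta)=\Re\sum_k a_{n_k}r^{n_k}e^{in_k\theta}$ has spectrum in $\{\pm n_k\}$ rather than in $\{n_k\}$, so you are invoking Sidon's theorem for real parts of lacunary series; this is harmless because the classical Riesz-product proof (after splitting $\{n_k\}$ into finitely many subsequences with ratio at least $3$ when $1<\lambda<3$) handles exactly this form, pairing $u_r$ with $P_K(\theta)=\prod_{k\le K}(1+\cos(n_k\theta+\phi_k))\ge 0$ of unit $L^1$-norm to extract $\tfrac12\sum_{k\le K}|a_{n_k}|r^{n_k}\le\|u_r\|_\infty$; this also disposes of any limiting issue since the bound is uniform in $K$. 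Second, the constant you obtain depends on $\lambda$, on the doubling constant $D$, and on the $\hg$-bound $K$, which is consistent with the statement. In short: correct, self-contained modulo the (genuinely nontrivial but classical) Sidon inequality, and almost certainly the same two-sided mechanism used in \cite{E}.
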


We use this to construct some examples.

Let $u(z) =\sum_{j =0}^\infinity 2^jz^{2^j} $, then by Theorem A, $u\in\hg $ for $g (x) =x $. Let $f (x) =x/\log x $ and
consider $H_f^{-1} u(z) =\Re\sum_{j =0}^\Infinity \fraction {2^j} {f (2^j)}z^{2^j}$. 
Then 
\begin {equation*}
\sum_{j =0}^ {J} \fraction {2^j} {f (2^j)}=\log 2 \sum_{j =0} ^J j \approx J^2 \approx (\log 2^J)^2
\end {equation*}
 hence by Theorem A, $H_f^{-1} u \in\hgt$ for $\gt =(\log x)^2$, and $H_f^{-1} u \notin\hgt$ for any 
 $\gt $ which is $o ((\log x)^2)$. This shows that $H_f^{-1} u $ grows faster than $g/f =\log x $, and the growth assumption in (b) is necessary.
 
To construct a more general example, define a sequence $\{m_k\} $ for $g $ as in \eqref {n}. The function $u (z) =\sum_{j =	1}^\infinity g (m_k)z^ {m_k} $ is in $\hg $ by Theorem A. If $f =g/\log g $, we get by a similar calculation as above that $H_f ^ {-1}u\in \hgt $ for $\gt (x) =(\log g (x))^2 $, and $H^{-1}_fu$ does not belong to any weighted space with weight which grows slower than $\tilde{g}$.

If the growth of $g $ and $f $ is even more similar, for example $f =g/\log\log g $, we get for the function $u (z) =\sum_{j =	1}^\infinity g (m_k)z^ {m_k} $ that $H_f ^ {-1}u\in \hgt $ for $\gt (x) = \log g\log\log g $. This shows that (c) is sharp, since $(g/f)\log f \approx \log g\log\log g $. When we take $f=g$ and the same function $u$ we obtain $H_g^{-1}u (z) =\sum_{k =1}^\infinity z^{m_k}$. Then Theorem A implies $H_g^{-1}u\in\hi_{\log g} $, we discuss this case in the next subsection.

\subsection{Integral operators as multipliers}
\label {integral operators}
Let $q(t)$ be an increasing weight that  satisfies the doubling condition with some constant $D$ and let $w(t)=q(1/(1-t))$, for $t<1$. We consider the following integral operator  defined on harmonic functions in the unit ball (see \cite{LM,E,EMM})
\[
I_q u(x)=\int_{1/2}^1 u(tx)d(-1/w(t)).\]
Let $u(x)=\sum_{k,l}a_{kl}Y_{kl}(x)$  be the decomposition of $u$ into spherical harmonics, then
\[
I_q u(x)=\sum_{k,l}a_{kl}\left(\int_{1/2}^1t^kd(-1/w(t))\right)Y_{kl}(x).\]
It is a coefficient multiplier of the form $H_f^{-1}$, where 
\begin{equation*}
f(n)=\left(\int_{1/2}^1 t^nd(-1/w(t))\right)^{-1}.
\end{equation*}
We have the following elementary estimate from above 
\[
f(n)\le \left(\int_{1-1/n}^1 t^nd(-1/w(t))\right)^{-1}\le 4q(n).\]
To estimate $f(n)$ from below we choose integers $m$ and $M$ such that $2^{\frac{n}{2m}}>2D$ but $m>\alpha n$, where $\alpha$ depends on $D$ only, and $2^M< m\le 2^{M+1}$. Then we get
\[
\int_{1/2}^{1-1/m}t^nd(-1/w(t))\le \sum_{j=1}^M \int_{1-2^{-j}}^{1-2^{-j-1}}t^nd(-1/w(t))\le
\sum_{j=1}^M (1-2^{-j-1})^n\frac{1}{q(2^j)}.\]
We want to estimate the last sum by a multiple of its final term. It is enough to check that it grows faster than a geometric progression. Using an elementary inequality $(1-x)/(1-2x)\ge 1+x$, when  $0<x<1/2$, and the standard estimate $(1+(2m)^{-1})^{2m}\ge 2$, we obtain
\[
\frac{(1-2^{-j-1})^n} {q(2^{j})}\frac{q(2^{j-1})}{(1-2^{-j})^{n}}\ge D^{-1}(1+2^{-j-1})^n\ge D^{-1}(1+(2m)^{-1})^n\ge D^{-1}2^{\frac{n}{2m}}>2,\]
for $j=2,...,M$.
Then 
\[f(n)^{-1}\le \int_{1/2}^{1-1/m}t^nd(-1/w(t))+\frac{1}{q(m)}\le \frac{2(1-2^{-M-1})^n}{q(2^M)}+\frac{1}{q(m)}\le \frac{C}{q(n)},
\]
where $C$ depends on $D$ only. 

For $\alpha>1$ we define also
\begin{equation}\label{f}
f(\alpha)= \left(\int_{1/2}^1 t^\alpha d(-1/w(t))\right)^{-1}.\end{equation}
To check the regularity of the function $f$ we need estimates of  the integrals
\[
j_k(\alpha)=\int_{1/2}^1 |\log t|^k t^\alpha d(-1/w(t)),\]
for $k=1,..., d$.
A calculation similar to the one above implies $j_k(\alpha)\le C\alpha j_{k-1}(\alpha)$ when $k=1,..., d$ and $C=C(d,D)$.
Further, $f'=j_1 f^2$ and taking the derivatives on both sides and using induction we see that $f^{(k)}(\alpha)\le Cf'(\alpha)\alpha^{1-k}$ when $k=1,..., d$. 

We summarize the estimates of this subsection in the following statement.

\begin{proposition}\label{p:reg}
Let $q$ be a positive continuous increasing function that satisfies the doubling condition, define $w(t)=q((1-t)^{-1})$ and let $f$ be given by \eqref{f}. Then  $f$ satisfies \eqref{regularity} and there exists  $A=A(q)$  that depends on $q$ only such that $A^{-1}q(n)\le f(n)\le Aq(n)$.   
\end{proposition}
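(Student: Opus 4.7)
The plan is to assemble the estimates sketched in the paragraphs above the statement into a proof of the two assertions. Both claims rely on a dyadic decomposition of the integral defining $J(\alpha):=1/f(\alpha)$, with the doubling of $q$ doing the analytic work.

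For the comparison $q(n)\asymp f(n)$, the upper bound $f(n)\le 4q(n)$ is immediate by restricting the integral in the definition of $J(n)$ to $[1-1/n,1]$, on which $t^n\ge(1-1/n)^n\ge 1/4$. For the reverse bound $f(n)\ge q(n)/A$, I would follow the dyadic calculation written immediately above the proposition: choose $m\asymp n$ with $2^{n/(2m)}>2D$ and $M$ with $2^M<m\le 2^{M+1}$, split $[1/2,1-1/m]$ into the annuli $[1-2^{-j},1-2^{-j-1}]$, and bound the $j$-th contribution by $(1-2^{-j-1})^n/q(2^j)$. The elementary inequality $(1+(2m)^{-1})^{2m}\ge 2$ combined with doubling of $q$ makes successive contributions form a geometric progression with ratio at least $2$, so the whole sum is controlled by its last term; adding the piece near $t=1$ gives $J(n)\le C/q(n)$.

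For the regularity, set $J(\alpha)=\int_{1/2}^1 t^\alpha\,d(-1/w(t))$, so that $f=1/J$ and differentiation under the integral yields $J^{(k)}(\alpha)=(-1)^k j_k(\alpha)$; in particular $f'=-J'/J^2=j_1 f^2$. The central analytic step is the comparison
\[
j_k(\alpha)\le \frac{C_k}{\alpha}\,j_{k-1}(\alpha),\qquad 1\le k\le d+1,
\]
with $C_k=C_k(d,D)$. I would prove this by a dyadic peak analysis: after the change of variable $s=-\log t$, the integrand $s^k e^{-\alpha s}\,d\nu(s)$ of $j_k$ is concentrated around $s\asymp k/\alpha$, and the doubling of $q$ translates into a dyadic-comparable structure for the measure $\nu$ corresponding to $d(-1/w)$. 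Splitting the integral at $s=2k/\alpha$ bounds the head directly by $(2k/\alpha)\,j_{k-1}(\alpha)$; the tail is handled by writing $s^k e^{-\alpha s}\le(2k/(e\alpha))^k e^{-\alpha s/2}$ and exploiting the doubling of $\nu$ to compare $\int e^{-\alpha s/2}\,d\nu$ with $\alpha^{k-1}j_{k-1}(\alpha)$, again through the same geometric-progression mechanism used for the lower bound on $f$.

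The regularity of $f$ then follows by a formal induction. Differentiating $f'=j_1 f^2$ repeatedly expresses $f^{(m)}$ as a finite linear combination of terms $c_a\,f^{\ell+1}\prod_{i=1}^\ell j_{a_i}$ with $a_i\ge 1$ and $\sum a_i=m$; this structure is verified by a one-line induction using the product rule and $j_a'=-j_{a+1}$. Iterating the comparison gives $\prod j_{a_i}\le C\,\alpha^{\ell-m}j_1^\ell$, and the identity $f^{\ell+1}j_1^\ell=(j_1 f)^{\ell-1}\cdot j_1 f^2=(j_1 f)^{\ell-1}f'$ recasts each such term as $c_a\,\alpha^{1-m}f'(\alpha)\,(\alpha j_1 f)^{\ell-1}$. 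The factor $\alpha j_1 f=\alpha j_1/J$ is bounded by the constant from the $k=1$ case of the comparison, so summing the finitely many terms gives \eqref{regularity}. The principal obstacle is the peak analysis for $j_k/j_{k-1}$, where doubling of $q$ is essentially used; everything after it is mechanical.
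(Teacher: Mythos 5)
Your plan for the two\mbox{-}sided comparison $A^{-1}q(n)\le f(n)\le Aq(n)$ is exactly the paper's argument: the upper bound on $f$ by restricting the integral to $[1-1/n,1]$, and the lower bound by the dyadic annuli $[1-2^{-j},1-2^{-j-1}]$ together with the geometric\mbox{-}progression trick that lets the last term dominate. Likewise, your induction writing $f^{(m)}$ as a combination of terms $f^{\ell+1}\prod_{i}j_{a_i}$ with $\sum a_i=m$, followed by $f^{\ell+1}j_1^{\ell}=(j_1f)^{\ell-1}f'$ and $\alpha j_1 f\le C$, is a correct and usefully explicit version of the paper's one\mbox{-}line ``differentiate $f'=j_1f^2$ and use induction''; you have also identified the inequality that drives everything, $j_k(\alpha)\le (C_k/\alpha)\,j_{k-1}(\alpha)$, in the correct form (the paper's displayed $j_k\le C\alpha\, j_{k-1}$ must be a misprint, since only the reciprocal power of $\alpha$ makes the induction close).

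The gap is in your tail estimate for that inequality when $k\ge 2$. You dominate the tail by $(2k/(e\alpha))^k\int e^{-\alpha s/2}d\nu$ and then want $\int e^{-\alpha s/2}d\nu\le C\alpha^{k-1}j_{k-1}(\alpha)$. Since $\int e^{-\alpha s/2}d\nu=J(\alpha/2)\asymp 1/q(\alpha)$ by the part of the proposition already proved plus doubling, this amounts to the \emph{lower} bound $j_{k-1}(\alpha)\gtrsim \alpha^{1-k}/q(\alpha)$, and doubling of $q$ does not give lower bounds of this kind: it pins down $\nu([0,\epsilon])\asymp 1/q(1/\epsilon)$ but allows all of that mass to sit arbitrarily close to $s=0$. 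Concretely, if $q$ is constant on a long interval $[a,b]$ and $a\ll\alpha\ll b$, then $\nu$ has no mass in $[1/b,1/a]$, so $j_{k-1}(\alpha)$ collects only a contribution $O(b^{1-k}/q(\alpha))$ from $s\le 1/b$ plus an exponentially small contribution $O(e^{-c\alpha/a})$ from $s\ge 1/a$, and both can be far below $\alpha^{1-k}/q(\alpha)$. So the chain of inequalities breaks at this point; only the case $k=1$, where the needed comparison reduces to $J(\alpha/2)\le CJ(\alpha)$, goes through as you describe. To be fair, the paper is no more forthcoming here --- it disposes of this step with ``a calculation similar to the one above'' --- but a correct argument must keep a factor $s^{k-1}e^{-\alpha s}$ alive on the tail (so as to compare with $j_{k-1}$ over the same region) rather than crushing the whole tail down to $J(\alpha/2)$, and it is not clear that doubling alone suffices for $k\ge2$; this is precisely the step your sketch, like the paper's, leaves unproved.
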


We say that $f$ is the regularization of $q$. Further,  note that $I_q=H_{f}^{-1}$ and we can apply Theorem \ref{multiplication} to this integral operator. In \cite{LM,E,EMM} the operator $I_q$ was studied on $h^\infty_q$, which corresponds to the  case $f=g$ in Theorem \ref{multiplication} (c). A more delicate result, a version of the law of the  iterated logarithm, was obtained for this case. 
In the next section we define new spaces that are better suited for the study of $I_q$ and multipliers described in part (c) of Theorem \ref{multiplication}.
\subsection{Regular growth spaces}
Let $\mathcal N=\{n_k\}$ be an increasing sequence with $n_{k+1}>2n_k$, $n_0=0$. We introduce spaces of harmonic functions of regular growth with respect to $\mathcal N$ in the following way
 \[
h^p_{\mathcal N}=\{u:\B\rightarrow\R, \Delta u=0, \sup_k\sup_{n_k\le m\le n_{k+1}}\|(\mathcal R_{m}-\mathcal R_{n_k})u\|_p<+\infty\}.\] This definition formally depends on the construction of $\mathcal R_n$, which is based on some smooth extension $q_0$ of the characteristic function of $[0,1]$. The theorem below shows however that $h^p_\mathcal N$ has another description and does not depend on the choice of $q_0$.

Let $f$ be a doubling weight and let $\mathcal N$ be a sequence associated with $f$ by \eqref{n}. Then Theorem \ref{restriction d} (d) implies that $u\in h^p_f$ if and only if 
\[\sup_{n_k\le m\le n_{k+1}}\|(\mathcal R_{m}-\mathcal R_{n_k})u\|_p\le Cf(n_{k+1})\] and  $h^p_{\mathcal N}$ is a subspace of $h^p_{\log f}$. It is easy to see that it is a proper subspace. We claim that $h^p_{\mathcal N}$ is the correct target space for the multiplier $H_f^{-1}$.   
\begin{theorem}\label{th:main}
Assume $f $  satisfies \eqref{gdouble} 
and
\eqref {regularity}, $1\le p\le\infty$, and let $\mathcal N$ be associated with $f$. Then $H_f$ maps $h_{\mathcal N}^p$ into $h_f^p$ and its inverse $H_f^{-1}$ maps $h_f^p$ into $h^p_{\mathcal N}$.
\end{theorem}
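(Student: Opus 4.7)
The plan is to use characterization (d) of Theorem \ref{restriction d}: $u\in h^p_f$ is governed by $\|\mathcal R_n u\|_p\le Cf(n)$, while $u\in h^p_{\mathcal N}$ is by definition governed by uniform boundedness of $\|(\mathcal R_m-\mathcal R_{n_k})u\|_p$. Since $H_f$ and $H_f^{-1}$ are Fourier multipliers, they commute with every $\mathcal R_n$, so in each direction the estimate reduces to applying $H_f^{\pm1}$ to a frequency-localized piece of $u$.

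The technical core is the following \emph{band estimate}: if $v$ is a harmonic polynomial whose spherical expansion is supported in degrees $j\in[n_k,2n_{k+1}]$, then
\[
\|H_f v\|_p\le Cf(n_{k+1})\|v\|_p\quad\text{and}\quad \|H_f^{-1}v\|_p\le \frac{C}{f(n_{k+1})}\|v\|_p.
\]
Granted this, the $H_f$ direction follows by fixing $n$ with $n_K\le n<n_{K+1}$, telescoping
\[
\mathcal R_n u=\mathcal R_{n_0}u+\sum_{k=0}^{K-1}(\mathcal R_{n_{k+1}}-\mathcal R_{n_k})u+(\mathcal R_n-\mathcal R_{n_K})u,
\]
applying the band estimate to each band-limited summand (whose $p$-norm is dominated by the $h^p_{\mathcal N}$ constant $M$ of $u$), and summing the geometric series $\sum_k f(n_{k+1})\lesssim f(n_{K+1})\asymp f(n)$ provided by \eqref{n}. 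The $H_f^{-1}$ direction is then immediate: set $w_{m,k}=(\mathcal R_m-\mathcal R_{n_k})u$, which lies in the band $[n_k,2n_{k+1}]$ and satisfies $\|w_{m,k}\|_p\le Cf(n_{k+1})$ by Theorem \ref{restriction d}(d) applied to $u\in h^p_f$; then $\|(\mathcal R_m-\mathcal R_{n_k})H_f^{-1}u\|_p=\|H_f^{-1}w_{m,k}\|_p\le C$ uniformly.

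To prove the band estimate, write $H_f^{\pm1}v=v*K$ with $K(x,y)=\sum_j\chi(j)f(j)^{\pm1}Z_j(x,y)$ for a smooth cutoff $\chi$ equal to $1$ on $[n_k,2n_{k+1}]$. The same summation-by-parts step used in Lemma \ref{q several} bounds $\sup_x\|K(x,\cdot)\|_1$ by $\sum_j A_j^d|\Delta^{d+1}(\chi f^{\pm1})(j)|$, which the Peano form for divided differences converts into $\int t^d|(\chi f^{\pm 1})^{(d+1)}(t)|\,dt$. Using \eqref{regularity}, which yields $|f^{(m)}|\lesssim f'/t^{m-1}$ and hence (by induction) $|(1/f)^{(m)}|\lesssim f'/(t^{m-1}f^2)$, and applying Leibniz to $\chi f^{\pm1}$, the integrand becomes essentially $f'(t)$ in the $H_f$ case and $f'(t)/f(t)^2$ in the $H_f^{-1}$ case.

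The main obstacle is the choice of $\chi$ in the $H_f^{-1}$ part. For $H_f$ a one-sided cutoff supported in $[0,4n_{k+1}]$ suffices, since $\int_0^{4n_{k+1}}f'=f(4n_{k+1})-f(0)\le Cf(n_{k+1})$ by doubling. For $H_f^{-1}$ the analogous one-sided cutoff would give $\int_1^{4n_{k+1}}f'/f^2\le 1/f(1)$, a $k$-independent constant, which is far too large to extract the required factor $1/f(n_{k+1})$. The remedy is to cut $\chi$ off from \emph{both} sides, supported in $[n_k/4,4n_{k+1}]$; then $\int_{n_k/4}^{4n_{k+1}}f'/f^2=1/f(n_k/4)-1/f(4n_{k+1})\le C/f(n_{k+1})$ by doubling together with the relation $f(n_k)\asymp f(n_{k+1})$ guaranteed by \eqref{n}. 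The Leibniz contributions from the new left transition region near $n_k$ produce integrals of the same order and are handled identically.
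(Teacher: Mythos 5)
Your proof is correct and follows essentially the same route as the paper's: your ``band estimate'' is precisely the paper's construction of the kernels $T_{n,f}$ and $S_{n,m,f}$ (a one-sided smooth cutoff times $f$, and a two-sided cutoff times $1/f$) with $L^1$ bounds obtained by summation by parts, the Peano form of divided differences, and \eqref{regularity}, and your observation that the $H_f^{-1}$ kernel must be cut off from both sides is exactly the point of the paper's $S_{n,m,f}$. The only difference is cosmetic: you spell out the telescoping over bands that the paper leaves implicit in its appeal to Theorem~\ref{restriction d}(d).
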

\begin{proof}
We need the following statement, if $f$ satisfies \eqref{gdouble} 
and
\eqref {regularity}, and then for any $n$  there exists  polynomial $T_{n,f}(x,y)=\sum_j a_j Z_j(x,y)$  such that $a_j=f(j)$ when $j\le n$ and and $\|T_{n,f}(x,\cdot)\|_1\le Cf(n)$. The proof is similar to one we gave when proving Theorem \ref{restriction d}(a) and (b). Let $a$ be a function with $d+1$ bounded derivatives such that $a=1$ on $[0,1]$ and $a=0$ on $(2,+\infty)$. Define $a_j=f(j)a(j/n)$, since $|(f(t)a(t/n))^{(d+1)}|\le C_k(f(t)n^{-d-1}+f'(t)t^{-d})$ when $t<2n$, we have 
\[\Delta^{d+1}a_j\le C_d (f(2n)n^{-d-1}+(f(j+d+1)-f(j))j^{-d}).\]
Then $\| T_{n,f}(x,\cdot)\|_1\le \sum_{j=1}^{2n} A_j^{d}|\Delta^{d+1}a_j|\le C_df(2n)$.

 Suppose that $u\in h_{\mathcal N}^p$ then for $n_k\le m\le n_{k+1}$ we have $(\mathcal R_{m}-\mathcal R_{n_k})(H_fu)=T_{2m,f}*(\mathcal R_{m}-\mathcal R_{n_k})u$ and 
\[\|(\mathcal R_{m}-\mathcal R_{n_k})(H_fu)\|_p\le Cf(4m)\|(\mathcal R_{m}-\mathcal R_{n_k})u\|_p\le CD^2f(n_{k+1}).\]
Thus $H_fu\in h_f^p$.

To prove the inverse we need another auxiliary function 
 $S_{n,m,f}=\sum_j b_jZ_j(x,y)$ where $b_j=f^{-1}(j)$ when $n\le j\le m$. Let function $b$ with bounded derivatives up to order $d+1$be such that  $b=1$ on (0,1) and $b=0$ on $(2,+\infty)$. Consider $b_j=(b(j/m)-b(2j/n)/f(j)$. We have $|b^{(d+1)}(t)|\le C(n^{-d-1}f(n)^{-1}+f'(t)f(t)^{-2}t^{-d})$ when $n/2\le t\le n$ and $|b^{(d+1)}(t)|\le C(m^{-d-1}f(n)^{-1}+f'(t)f(t)^{-2}t^{-d})$ when $n\le t\le 2m$.  Hence
\[\| S_{n,f}(x,\cdot)\|_1\le \sum_{j=n/2}^{n} A_j^{d}|\Delta^{d+1}
b_j|+\sum_{j=n}^{2m} A_j^{d}|\Delta^{d+1} b_j|\le C_df(n)^{-1}.\]
Now suppose that $u\in h_f^p$, we have $(\mathcal R_{m}-\mathcal R_{n_k})(H^{-1}_fu)=S_{n_k,2m,f}*(\mathcal R_{m}-\mathcal R_{n_k})u$ and 
\[\|(\mathcal R_{m}-\mathcal R_{n_k})(H^{-1}_fu)\|_p\le Cf(n_k)^{-1}\|(\mathcal R_{m}-\mathcal R_{n_k})u\|_p\le C.\]
Thus $H^{-1}_fu\in h_{\mathcal N}^p$.
\end{proof}
\section{Fourier multipliers on weighted spaces}
\subsection{Characterization of multipliers}
First, we give a description of all multipliers between two weighted spaces $h_g^\infty$ and $h_{\tilde{g}}^\infty$ when the weight $\tilde{g}$ does not grow much slower than $g$, for example the condition of the theorem holds for $\tilde{g}=g$. We remind that a multiplier is considered as a formal series and define the operator $H_f$ on $\lambda$ by
\[
H_f\lambda(x,y)=\sum_{k,l}\lambda_{kl}f(k)Y_{kl}(x)Y_{kl}(y).\]
Then $H_f\lambda$ is also a formal series but its partial sums are well-defined functions on $S\times S$.  

\begin{theorem}
\label {both}
Let $g $ and $\gt$ satisfy \eqref{gdouble}.
Assume that $\gt (n)/g (n)^\ve  
\nearrow \infinity$ as $n\rightarrow \infinity$ for some $\ve>0$. Then
$\lambda $ is a multiplier from $\hg $  to $\hgt $ if and only if 
\[\|(\sigma_n^{d}(H_f \lambda))(\cdot,y)\|_1\le C 
\gt (n),\]
 where $f$ is the regularization of $g$ defined in Proposition \ref{p:reg}.
\end{theorem}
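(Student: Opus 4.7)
The plan is to prove the two implications separately, each reducing to Theorem~\ref{restriction d} together with parts of Theorem~\ref{multiplication}. The key observation is the factorization $\lambda = H_f^{-1}\circ (H_f\lambda)$ on the level of coefficient sequences, which converts the multiplier action into a composition of operators whose mapping properties are already understood.

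For sufficiency, I would fix $u\in h_g^\infty$ and set $v:=(H_f\lambda)u$, the harmonic function with coefficients $f(k)\lambda_{kl}a_{kl}$. The kernel $\sigma_n^d(H_f\lambda)(x,y)$ is a polynomial of exactly the form required by Theorem~\ref{restriction d}(b), and a direct coefficient computation gives $u*\sigma_n^d(H_f\lambda)=\sigma_n^d v$. Since $H_f\lambda(x,y)=\sum_{k,l}f(k)\lambda_{kl}Y_{kl}(x)Y_{kl}(y)$ is symmetric in $(x,y)$, the hypothesis transfers to $\sup_x\|\sigma_n^d(H_f\lambda)(x,\cdot)\|_1\le C\tilde{g}(n)$, and Theorem~\ref{restriction d}(b) produces
\[
\|\sigma_n^d v\|_\infty \le Cg(n)\tilde{g}(n)\asymp Cf(n)\tilde{g}(n),
\]
placing $v$ in $h_{f\tilde{g}}^\infty$ by Theorem~\ref{restriction d}. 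Since $\tilde{g}/g^\varepsilon\nearrow\infty$ and $f\asymp g$ by Proposition~\ref{p:reg}, the growth condition $(f\tilde{g})/f^{1+\varepsilon}=\tilde{g}/f^\varepsilon\to\infty$ holds, so Theorem~\ref{multiplication}(b) delivers $\lambda u=H_f^{-1}v\in h_{\tilde{g}}^\infty$.

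For necessity, the closed graph theorem first gives $\|\lambda u\|_{h_{\tilde{g}}^\infty}\le K\|u\|_{h_g^\infty}$. I would probe the multiplier with the family of test functions $u_\phi:=H_fP[\phi]$ parametrized by $\phi\in L^\infty(S)$ with $\|\phi\|_\infty\le 1$, where $P[\phi]$ is the Poisson integral. Since $P[\phi]$ is bounded harmonic, the divided-differences estimate in the proof of Theorem~\ref{multiplication}(a), applied with trivial weight $g_0\equiv 1$ (for which only the doubling condition is used), places $u_\phi$ in $h_f^\infty\asymp h_g^\infty$ with $\|u_\phi\|_{h_g^\infty}\le C$, hence $\|\sigma_n^d(\lambda u_\phi)\|_\infty\le C\tilde{g}(n)$ by Theorem~\ref{restriction d}. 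A direct spherical-harmonic coefficient computation then yields
\[
\sigma_n^d(\lambda u_\phi)(x)=\int_S \sigma_n^d(H_f\lambda)(x,y)\,\phi(y)\,ds(y),
\]
so the $L^1$--$L^\infty$ duality over $\phi$ produces $\sup_x\|\sigma_n^d(H_f\lambda)(x,\cdot)\|_1\le C\tilde{g}(n)$, and the $(x,y)$-symmetry of the kernel converts this into the asserted bound $\sup_y\|\sigma_n^d(H_f\lambda)(\cdot,y)\|_1\le C\tilde{g}(n)$.

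The hard part will be handling the hypothesis of Theorem~\ref{multiplication}(b) with care: although $\tilde{g}/g^\varepsilon\nearrow\infty$ by assumption, the quotient $\tilde{g}/f^\varepsilon$ need not be strictly monotonic once $g$ is replaced by its regularization $f$. I would address this either by shrinking $\varepsilon$ to some $\varepsilon'<\varepsilon$ so that the extra factor $g^{\varepsilon-\varepsilon'}$ dominates the bounded non-monotonic oscillation of $g/f$, or by observing that only the essential growth rate, not strict monotonicity, is actually used in the crucial telescoping estimate~\eqref{final part} within the proof of Theorem~\ref{multiplication}(b).
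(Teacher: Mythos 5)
Your proof follows essentially the same route as the paper: both directions rest on the factorization $\lambda=H_f^{-1}\circ(H_f\lambda)$, with Theorem~\ref{restriction d} supplying the polynomial convolution bounds, Theorem~\ref{multiplication}(a) plus duality over bounded harmonic functions (equivalently, Poisson integrals of unit-ball elements of $L^\infty(S)$) giving necessity, and Theorem~\ref{multiplication}(b) applied to $\lambda(H_fu)\in h^\infty_{f\tilde{g}}$ giving sufficiency. Your closing caveat about the possible loss of strict monotonicity of $\tilde{g}/f^{\varepsilon}$ after regularization is a legitimate point the paper passes over silently, and your fix is correct, since the telescoping estimate \eqref{final part} only uses the comparison $g(j)/f(j)^{1+\varepsilon}\le C\, g(n)/f(n)^{1+\varepsilon}$ for $j\le n$, which survives replacing $g$ by $f\asymp g$ at the cost of a constant.
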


This generalizes \eqref{m1}, which is 
Theorem 6 in \cite{SW}. We main job is already done; we will deduce this result from Theorem \ref{multiplication} in a way similar to that in \cite{SW}.

\begin {proof} Let $\la\in (\hg,\hgt) $. 
 By  Theorem \ref {multiplication} (a) 
$H_f $ is a bounded operator from $h^\infinity $ to $h_f^\infty=h_g^\infty $, where $h^\infinity $ is the space of bounded harmonic functions in the ball. 
Let $u=\sum_{k,l} a_{kl}Y_{kl}(x)\in h^\infinity $ be an arbitrary function with $\|u\|_\infty\le 1$. Then since $\lambda (H_fu)\in\hi_{\gt} $,  Theorem \ref {restriction d} implies, 
\begin {multline*}
\left |\int_{S} \sigma_n^{d}(H_f \lambda) (x,y) u(x)ds(x)\right|=\left|\sum_{k,l} a_{kl}f(k)\lambda_{kl}\frac{A_{n-k}^{d}}{A_n^{d}}Y_{kl}(y)\right|\\=|\sigma_n^{d}(\lambda(H_fu))(y)|  \le C\gt (n).
\end {multline*}
Hence $\|\sigma_n^{d}(H_g \lambda)(\cdot,y)\|_1 
  \le C\gt (n) $. 

Now suppose $\|\sigma_n^{d}(H_g \lambda)(\cdot,y)\|_1
  \le C\gt (n) $  
and let $v\in\hg $. Then by 
Theorem~\ref{restriction d},
\begin {multline*}
| \sigma^{d}_n(\la(H_f v) ) (y)|=
\left |\int_{\Sm}  v(x)\sigma^{d}_n(H_f \lambda)(x,y)ds(x)\right|\\ \le Cg(n) \| (\sigma_n^{d}(H_g \lambda))(\cdot,y)\|_1  
\le C_1 f(n)\gt(n).
\end {multline*}
Thus, $\lambda(H_f v)\in h^\infty_{f\gt}$ but $\lambda(v)=H_{f}^{-1}(\lambda(H_f v))$ and by
 Theorem \ref {multiplication} (b) 
$\la (u)\in \hgt$.
\end {proof}

\subsection {Solid spaces and weighted mix-norm spaces} 
In this subsection we collect some basic facts on solid spaces that will be needed to discuss various examples of multiplies. A sequence space $A$ is called {\it solid} if $a\la
 \in A $ whenever $a\in A $ and $\la\in \ell^\infty$. If $A $ is not solid, we can instead find solid spaces contained in it  or containing it. The  smallest solid space containing $A $ is denoted $S (A) $, and the largest solid space contained in $A $ is $s (A) $. The study of smallest and largest solid spaces in connection with multipliers problems was initiated in \cite{AS}, see also \cite{A,Buck1}.
 
 The space $\hv $ is not solid, and this means that functions in $\hg $ cannot be characterized in terms of the 
  absolute values of their coefficients. For the two-dimensional case it is known that
$S (\hv) $ is the space of functions whose coefficients satisfy 
\begin {equation}\label {large solid}
\left(\sum_{j=-n}^n |a_j|^2\right)^{1/2}\le C g(n),
\end {equation}
 and 
$s (\hv) $ consists of functions satisfying 
\begin {equation}\label {small solid}
\sum_{j=-n}^n |a_j|\le C g(n),
\end {equation}
 see \cite {BST}. The most difficult part is based on a theorem of  de Leeuw, Katznelson and Kahane, see \cite{dLKK}. Applying this result one easily gets a similar characterization in higher dimensions.

Let  $J=\{(k,l): k\in \Z_+, 1\le l\le L_k\}$ be the index set for the spherical harmonics expansion. Let further $\{n_m\}_m$ be an increasing sequence of positive integers, we define
$J_0  =\{(k,l)\in J:0\le k\le n_0\}$  and $J_m  =\{(k,l)\in J:n_{m-1}< k\le n_m\}$ for $m\ge 1 $.
Let
$\lambda $ be a sequence and let
$$\|\lambda\|_{p,q}=\left( \sum_{m=0}^\infty \left[\sum_{j\in J_m} |\lambda_j|^p\right]^{q/p}\right)^{1/q},$$
where $1\le p,q<\infinity $. 
If $p$ or $q$ are infinite, we replace the corresponding sum by a supremum.
The space of sequences $\{\lambda_j\}_{j\in J}$ for which $\|\lambda\|_{p,q} $ is finite is called a mixed-norm space. If $p =q $ this is just the usual $\ell^p $ space.
Multipliers between such spaces were determined by C.~N.~Kellogg in \cite{Ke} for the sequence $n_k=2^k$.
The mixed-norm can be generalized further by introducing a positive weight $f $:
$$\|\lambda\|_{p,q,f,\n}=\left( \sum_{m=0}^\infty \left[\fraction {(\sum_{j\in J_m} |\lambda_j|^p)^{1/p}} {f (n_m)}\right]^{q}\right)^{1/q}.$$
Let $\ell^{p,q}_{f,\n}$ be the set of sequences for which $\|\lambda\|_{p,q,f,\n} $ is finite; we call such spaces weighted mixed-norm spaces. Different mixed-normed spaces appeared in \cite{Buck}.

Let  $g$ satisfy the doubling condition and let $\n=\{n_m\}_m$ be defined by \eqref {n} with $g$ in place of $f$. We consider the corresponding partition $\n$ of the index set. Then \eqref{large solid} and \eqref{small solid} can be written as
\begin{equation*}
S(\hv)=l^{2,\infty}_{g,\n}, \quad s(\hv)=l^{1,\infty}_{g,\n}.
\end{equation*}

\subsection {Examples of multipliers on $\hg$} 
The sequence $\lambda_j=1$ is obviously a multiplier on $\hg$ (i.e., a multiplier from $\hg$ to $\hg$), and a sequence cannot be a multiplier on $\hg $  unless it is in $\ell^\infinity$. But not all bounded sequences are multipliers since $\hg $ is not solid. We determine the largest solid subspace of the multipliers from $\hg$ to itself.

Multipliers between the standard (without weight) mixed normed spaces were described in \cite{Ke}.  Similar results hold for 
the weighted mixed-norm spaces, see \cite{Ethesis}, in particular,
\begin{equation*}
(l^{2,\infty}_{g,\n}, l^{1,\infty}_{g,\n})=l^{2,\infty}_{\n},
\end{equation*}
where the last space in unweighted. It is easy to see that $s((A, B))=(S(A), s(B))$ thus we have the following
\begin{equation*}
s((\hg,\hg))=l^{2,\infty}_{\n}.
\end{equation*}
This space depends on the weight (through the partition $\n$) and it always contains $l^2$. For the weights $g(x)=x^\alpha$ we can take $n_k=2^k$ and we get  the standard mixed-norm spaces $l^{2,\infty}$. For $g(x)=(1+\log x)^\beta$ we get $n_k=2^{2^k}$ and the space $l^{2,\infty}_{\n}$ is smaller than $l^{2,\infty}$.
For any $\n$ the sequence  $\lambda_j =1/\sqrt {n_k}  $ for $n_{k - 1} <j\le n_k $ is in $\ell_{\n}^{2,\infinity} $. This gives some examples of multipliers.

It is proved in \cite{SW} (in dimension two) that  if $\gt/g\rightarrow \infty $ as $x\rightarrow \infty $, then $( \hv,\hv)\subset ( \hvt,\hvt)$ and  if in addition there is an integer $m>1$ such that $g^m/\gt\rightarrow \infty $ as $x\rightarrow \infty $, then $( \hv,\hv)=( \hvt,\hvt)$. Theorem \ref{multiplication} shows that the same is true in higher dimensions.  The inclusion is strict for pairs of weights of different growth, for example when $g(x)=x^\alpha$ and $\tilde g(x)=(1+\log x)^{\beta}$.  We see that if $l^{2,\infty}_{\n}\neq l^{2,\infty}_{\tilde{\n}}$ then $(\hv, \hv)\neq (\hvt, \hvt)$ since they have distinct solid parts. 

The multipliers we looked at so far are small ones. 
In \cite {BKV} a vector space of analytic functions $X $ is said to have the \textit {small multiplier property} when there exists $r >0 $ such that $\lambda =\{\lambda_j\} \in (X,X) $ for any $\lambda$ with $\lambda_j=O(j^{-r}),\ j\rightarrow\infty$. We see that all weighted spaces $\hv$ have the small multiplier property since $l^2\subset(\hv,\hv)$ (for example we can take any $r>1$ when $d=1$). Further Theorem \ref{multiplication} (a) shows that every regular bounded sequence is a multiplier. Then  any sequence $\{\lambda_{kl}=f(k)+d_{kl}\},$ where $f$ is bounded and satisfies \eqref{regularity} and  $\{d_{kl}\}\in l^{2,\infty}_{\n}$, is a multiplier from $\hv$ to $\hv$.

\section*{Acknowledgements}
This work is an extended version of the last chapter of the Ph.D. Thesis of Kjersti Solberg Eikrem. We are grateful to Dragan Vukoti\'c and Catherine B\'en\'eteau who read the Thesis prior to the defense for their useful comments and suggestions. The work  was carried out at Center of Advanced Study at the Norwegian Academy of Sciences and Letters in Oslo and the Department of Mathematical Sciences at NTNU, Trondheim, Norway, and it is a pleasure to thank the Center and Department for the support.
\bibliographystyle{plain}
\bibliography{bibmulti1}
\end{document}